\newtheorem{theorem}{Theorem}[section]
\newtheorem{corollary}[theorem]{Corollary}
\newtheorem{proposition}[theorem]{Proposition}
\theoremstyle{definition}
\newtheorem{remark}[theorem]{Remark}
\renewcommand{\O} {\mathcal{O}}
\def\alg{\operatorname{alg}}
\def\aut{\operatorname{Aut}}
\def\lie{\operatorname{Lie}}
\def\AA{\mathbb{A}}
\def\CC{\mathbb{C}}
\def\PP{\mathbb{P}}
\def\dlog{\operatorname{log}_\delta}
\begin{document}

\title{A differential analogue of the wild automorphism conjecture}

\author[Jason Bell]{Jason Bell}
\address{Department of Pure Mathematics \\University of Waterloo \\ Waterloo, ON \\ Canada \\ N2L 3G1}
\email{jpbell@uwaterloo.ca}

\author[Colin Ingalls]{Colin Ingalls}
\address{School of Mathematics and Statistics\\
Carleton University\\
Ottawa, ON \\ K1S 5B6}
\email{cingalls@math.carleton.ca}

\author[Rahim Moosa]{Rahim Moosa}
\address{Department of Pure Mathematics \\University of Waterloo \\ Waterloo, ON \\ Canada \\ N2L 3G1}
\email{rmoosa@uwaterloo.ca}

\author[Matthew Satriano]{Matthew Satriano}
\address{Department of Pure Mathematics \\University of Waterloo \\ Waterloo, ON \\ Canada \\ N2L 3G1}
\email{msatrian@uwaterloo.ca}

\dedicatory{Dedicated to the memory of Zo\'e Chatzidakis (1955--2025)}

\thanks{The authors were supported by Discovery Grants from the National Sciences and Engineering Research Council of Canada.}
\date{\today}
\keywords{algebraic vector fields, abelian varieties, wild automorphisms, differential fields, $D$-varieties}

\subjclass[2020]{12H05, 03C98, 32M25, 13N15, 11J95}

\begin{abstract}
A differential analogue of the conjecture of Reichstein, Rogalski, and Zhang in algebraic dynamics is here established: if $X$ is a projective variety over an algebraically closed field of characteristic zero which admits a global algebraic vector field $v:X\to TX$ such that $(X,v)$ has no proper invariant subvarieties then $X$ is an abelian variety.
Vector fields on abelian varieties with this property are also examined.
Some of the analysis works in the more general context of $D$-varieties over differential fields: projective $D$-varieties without proper $D$-subvarieties are homogeneous.
But the main theorem does not extend: an example of a $D$-variety structure on the projective line without proper $D$-subvarieties is given.
\end{abstract}
\maketitle

\section{Introduction}

\noindent
The theory of difference fields and the theory of differential fields share many similarities, with results in one of the fields often motivating and inspiring analogues in the other. Notable examples of this principle include Picard-Vessiot theory, which was developed in the late 19th century for differential fields (see \cite[Chapt. VIII]{Borel}) with the analogous theory later being developed for difference fields \cite{Fra}; Cantat's theorem \cite{Can} on invariant hypersurfaces, which gives a difference analogue of the Jouanolou-Ghys theorem \cite{Jou,Ghy}; and transcendence results for special values of solutions to differential equations (see \cite[Theorem 5.23]{FN}), which now have satisfying counterparts for solutions to Mahler-type difference equations \cite{AF}.  

In this paper, we consider a differential analogue of a dynamical conjecture due to Reichstein, Rogalski, and Zhang \cite{RRZ}, which asserts that if an automorphism $\sigma$ of a complex projective variety $X$ has the property that $\sigma$ leaves no proper Zariski closed subset invariant, then $X$ is an abelian variety.  Reichstein, Rogalski, and Zhang called such automorphisms {\em wild}, and it is easy to see that abelian varieties always have wild automorphisms coming from suitably chosen translation maps.
A differential analogue of this conjecture can be formulated by observing that for an algebraic vector field $v:X\to TX$, we can view a subvariety $Y$ as {\em invariant} for $(X,v)$ if $v|_Y$ maps $Y$ into $TY$.  Indeed, this point of view is what inspired Cantat to prove his analogue of the Jouanolou-Ghys theorem. In analogy with the terminology used by Reichstein, Rogalski, and Zhang, we say that $v$ is \emph{wild} if there do not exist any proper closed subsets $Y$ of $X$ with the above invariant property.  

In this note we point out that the corresponding differential version of the conjecture of Rogalski, Reichstein, and Zhang holds.

\begin{theorem} \label{thm:main}
Suppose $X$ is a projective variety over an algebraically closed field of characteristic zero.
If $X$ admits a wild vector field then $X$ is an abelian variety.
\end{theorem}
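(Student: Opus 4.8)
The plan is to show that a wild vector field forces $X$ to be homogeneous under a commutative algebraic group, and then to invoke the classical fact that a connected projective algebraic group is an abelian variety. We may assume $\dim X \geq 1$, the zero-dimensional case being trivial. First I would record the invariance facts that make wildness usable. The singular locus $X_{\mathrm{sing}}$ is invariant for any vector field: a derivation of $\mathcal{O}_X$ preserves the Fitting ideals cutting out $X_{\mathrm{sing}}$, so $v$ is tangent to $X_{\mathrm{sing}}$; being a proper closed subvariety, wildness forces it to be empty, and hence $X$ is smooth. (Similarly the zero scheme $Z(v)$ is invariant, since locally $v(f_j)=\sum_i f_i\,\partial f_j/\partial x_i\in(f_1,\dots,f_n)$, so $v$ has no zeros, though this will be subsumed below.)

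Next I would pass to the automorphism group. For $X$ smooth projective, $\mathrm{Aut}^0(X)$ is a connected algebraic group with $\mathrm{Lie}(\mathrm{Aut}^0(X)) = H^0(X,T_X)$, so $v$ is an element of this Lie algebra. Let $G\subseteq\mathrm{Aut}^0(X)$ be the Zariski closure of the one-parameter subgroup generated by $v$; this is a connected \emph{commutative} algebraic subgroup with $v\in\mathrm{Lie}(G)$, acting on $X$. The key point is that for each $x\in X$ the orbit closure $\overline{G\cdot x}$ is $G$-invariant, so the $G$-action preserves its ideal, and therefore so does the derivation $v\in\mathrm{Lie}(G)$; thus $\overline{G\cdot x}$ is a $v$-invariant subvariety in the sense of the paper. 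By wildness, $\overline{G\cdot x}=X$ for every $x$. Since each orbit is open in its own closure, every $G$-orbit is open in $X$; as $X$ is irreducible and the orbits partition $X$, there is exactly one orbit, i.e. $G$ acts transitively.

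Finally, transitivity gives $X\cong G/\mathrm{Stab}_G(x_0)$. Because $G$ is commutative the stabilizer is normal, so this quotient is itself an algebraic group, and being (isomorphic to) $X$ it is projective and connected. A connected projective algebraic group is an abelian variety, which completes the argument.

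The step I expect to demand the most care is the transition from the infinitesimal datum $v$ to the global group action: namely, constructing the commutative algebraic subgroup $G$ with $v\in\mathrm{Lie}(G)$ (integrating the one-parameter subgroup and taking its Zariski closure over an arbitrary algebraically closed field of characteristic zero), and verifying rigorously, in the possibly scheme-theoretic setting, that $G$-invariant closed subsets are exactly the $v$-invariant subvarieties, so that wildness genuinely applies to the orbit closures. Everything downstream of transitivity is then a short appeal to the structure theory of algebraic groups.
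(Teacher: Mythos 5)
Your strategy is sound and genuinely different from the paper's, but it has one soft spot, and it is exactly the step you flagged: the construction of the commutative group $G$. Over a general algebraically closed field $k$ of characteristic zero there is no one-parameter subgroup to integrate: the flow of $v$ exists only analytically over $\mathbb{C}$, and $v$ need not lie in the Lie algebra of \emph{any} one-dimensional algebraic subgroup of $\aut_0(X)$ (for a generic vector field on a simple abelian variety of dimension at least two, the only algebraic subgroup whose Lie algebra contains $v$ is the whole variety). So ``the Zariski closure of the one-parameter subgroup generated by $v$'' is not available in the algebraic category. The correct replacement is the \emph{algebraic hull} of $v$: the smallest algebraic subgroup $G\subseteq\aut_0(X)$ with $v\in\lie(G)$. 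It exists because in characteristic zero intersections of algebraic subgroups are smooth with Lie algebra the intersection of the Lie algebras, and it is connected by minimality. Its commutativity is a classical fact going back to Chevalley, with a short proof valid also for non-affine groups (which you need, since $\aut_0(X)$ is supposed to turn out to be an abelian variety): since $[v,v]=0$, $v$ lies in $\lie(Z_G(v))$, the centralizer for the adjoint action, so minimality forces $Z_G(v)=G$; thus $\operatorname{Ad}(G)$ fixes $v$, and differentiating, $v$ is central in $\lie(G)$, hence $v\in\lie(Z(G))$ (these Lie algebras agree in characteristic zero); minimality again forces $G=Z(G)^0$. With this substitution the rest of your argument is correct: differentiating the orbit map $G\to\overline{G\cdot x}$ at the identity shows $G$-invariant closed subvarieties are $v$-invariant; orbits are open in their closures, so wildness plus irreducibility of $X$ (which you should justify --- irreducible components are invariant, as in Remark~1.2 of the paper) gives transitivity; in characteristic zero the orbit map $G/\operatorname{Stab}_G(x_0)\to X$ is an isomorphism; and a connected projective group variety is an abelian variety.

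The comparison with the paper is instructive. The paper proves transitivity of the full group $\aut_0(X)$, and its proof that $\aut_0(X)$-invariant subvarieties are $v$-invariant (Proposition~2.1) goes through differentially closed fields and Buium's theorem on projective $D$-varieties; your orbit-map differentiation replaces this with elementary algebraic group theory. But transitivity of $\aut_0(X)$ is weaker than what you obtain, since that group need not be commutative; so the paper must then invoke Sancho de Salas's classification of complete homogeneous varieties to split $X\cong Y\times A$ with $Y=H/P$ and $H^1(Y,\O_Y)=0$, and finally kill $Y$ by the cohomological lifting argument of its Section~3 (the obstruction to lifting $v$ to a very ample line bundle lies in $H^1(Y,\O_Y)$, and an eigenvector of the lifted derivation of the homogeneous coordinate ring produces a proper invariant divisor). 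By securing commutativity of the acting group at the outset, you collapse all three of these ingredients into the single classical statement that a connected projective algebraic group is an abelian variety. The trade-off is Chevalley's hull theorem in place of Buium's theorem; both routes rest on representability of the automorphism functor with $\lie(\aut_0(X))=H^0(X,TX)$.
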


Note that wildness precludes the existence of a nontrivial rational first integral.
Smooth projective varieties $X$ that admit a vector field having no nontrivial rational first integrals were studied in~\cite[$\S$6.4]{abred}, where it was shown that the Albanese map on $X$ is surjective and its generic fibre is birationally equivalent to a homogeneous space for a connected linear algebraic group action.
As it turns out, however, this fact does not play a role in our proof.
Instead, we show directly in~$\S$\ref{sec:Buium}, using~\cite[Thm.~2.1]{Buium} of Buium on $D$-varieties, that under the wildness hypothesis $\aut_0(X)$ will act transitively on $X$, which forces $X=Y\times A$ where $A$ is an abelian variety and $Y$ is a homogeneous space for a connected linear algebraic group.
In particular, $H^1(Y,\mathcal{O}_Y)=0$.
But $Y$ inherits a wild vector field from $X$, and we show in~$\S$\ref{lifting-derivations} that wildness forces $h^1(Y,\mathcal{O}_Y)>0$ unless $Y$ is zero-dimensional.
We thus conclude that $X=A$.
This proof is put together in Section~\ref{Proof}.

Examples of wild vector fields on abelian varieties exist: all nontrivial vector fields on a simple abelian variety are wild.
In general, a vector field on an abelian variety is wild if and only if it admits no nontrivial rational first integrals.
These facts are established in Section~\ref{sec:wvfab}.

Algebraic $D$-varieties make a brief appearance in our proof, and are recalled in Section~\ref{sec:Buium} below.
They can be viewed as vector fields twisted by a derivation on the base field.
One may wonder whether the only projective varieties admitting a $D$-variety structure without proper $D$-subvarieties are abelian varieties -- that is, whether Theorem~\ref{thm:main} holds of $D$-varieties in general.
In a final section, we give an example showing that that is not the case: the projective line admits a ``wild" $D$-variety structure over $(\CC(t)^{\alg},\frac{d}{dt})$.

\begin{remark}
\label{rem:irred}
Irreducible components of algebraic vector fields are invariant -- see, for example~\cite[Thm.~2.1]{kaplansky}.
So, if $X$ admits a wild vector field then it is irreducible.
Moreover, although we do not need it in the sequel, wildness also forces $X$ to be smooth as the singular locus is invariant -- see~\cite[Thm.~12]{Seidenberg}, or~\cite{Hart} for a generalisation.
In fact, we do not even need to assume reduced in Theorem~\ref{thm:main}, the reduced locus is also invariant by~\cite[Lemma 1.8]{kaplansky}.
\end{remark}

\bigskip
 \section{Transitivity of the automorphism group}
 \label{sec:Buium}

\noindent
In this section, we show that a projective variety admitting a wild vector field is a homogeneous space for its automorphism group.
Upon reading an earlier version of this paper, Serge Cantat communicated a simpler proof of this fact to us, using the holomorphic flow induced by an algebraic vector field.
We present our original proof here, which is based on results of Buium appearing in~\cite{Buium}.
One advantage of our approach is that it works in the more general category of algebraic $D$-varieties, and this may be of independent interest.

We begin by briefly recalling the setting of $D$-algebraic geometry.
Fix an algebraically closed differential field $(K,\delta)$ of characteristic zero with (algebraically closed) field of constants~$k$.
We will need the notion of a {\em $D$-variety over $(K,\delta)$} in the sense of Buium~\cite{Buium}: an algebraic variety $Y$ over $K$ equipped with a derivation on $\mathcal{O}_Y$ that extends $\delta$ on $K$.
Such a derivation induces, and indeed is determined by, a regular section to the {\em prolongation} $\tau Y\to Y$, a certain torsor of the tangent space $TY\to Y$ induced by $\delta$.
In affine co-ordinates $x=(x_1,\dots,x_n)$ and $y=(y_1,\dots,y_n)$, the defining equations for $\tau Y$ are of the form
$$f^\delta(x)+\sum_{i=1}^n\frac{\partial f}{\partial x_i}(x)y_i=0$$
as $f\in K[x]$ ranges among a generating set for the ideal of (the corresponding affine chart of) $Y$, and $f^\delta$ is the polynomial obtained from $f$ by applying $\delta$ to the coefficients.
Given a $D$-variety $(Y,s)$, a $D$-\emph{subvariety} is a subvariety $Y'\subseteq Y$  such that $s(Y')\subseteq\tau(Y')$.
For a more detailed (but still brief) introduction to these notions, including additional references, see~\cite[$\S$2.1]{BLM}.

If $K=k$, so that $\delta$ is the trivial derivation, then a $D$-variety is just a variety equipped with a vector field, and the $D$-subvarieties are just the invariant subvarieties of that vector field.
So the context of $D$-varieties extends that of algebraic vector fields.
We will be interested in a situation where it may be that $K\neq k$ but where $Y$ is defined over~$k$ in the sense that it is the base extension of a variety $X$ over $k$.
We denote this by $Y=X_K$.
Note that in this case $\tau Y=(TX)_K$.


\begin{proposition}\label{prop:aut0-wild}
Suppose $(K,\delta)$ is an algebraically closed differential field of characteristic zero with field of constants~$k$.
Suppose $X$ is an irreducible projective variety over $k$ and $Z\subseteq X$ is a closed subvariety that is $\aut_0(X)$-invariant.
Then $Z_K$ is a $D$-subvariety of $(X_K,s)$, for any $D$-variety structure $s$ on $X_K$ over $(K,\delta)$.

In particular, every $\aut_0(X)$-invariant subvariety of~$X$ is invariant for any algebraic vector field on~$X$.
\end{proposition}

\begin{proof}
The ``in particular" clause is precisely the specialisation to when $K=k$.

Let $(F,\delta)$ be a differentially closed field extending $(K,\delta)$ whose field of constants is~$k$.
That it is {\em differentially closed} means that every finite system of differential-polynomial equations and inequations over~$F$ that has a solution in some differential field extension of $(F,\delta)$ already has a solution in $(F,\delta)$.
We view $(X_F,s_F)$ as a $D$-variety over $(F,\delta)$.
To show that $Z_K$ is an invariant subvariety of $(X_K,s)$ it suffices to show that $Z_F$ is a $D$-subvariety of $(X_F,s_F)$.

By a theorem of Buium~\cite[Thm~II.2.1]{Buium}, because $X$ is projective and $(F,\delta)$ is differentially closed, $(X_F,s_F)$ is isomorphic to the trivial $D$-variety $(X_F,0)$.
In fact, there is $\sigma \in \aut_0(X)(F)$ such that the following diagram commutes:
$$\xymatrix{
(TX)_F\ar[r]^{\tau(\sigma)}&(TX)_F\\
X_F\ar[u]^{0}\ar[r]^{\sigma}&X_F\ar[u]^{s_F}
}$$
Let us sketch how~$\sigma$ is found.
As $X$ is defined over~$k$, $\tau(X_K)=T(X_K)$, and so we can also view $s$ as an algebraic vector field on $X_K$, which we will denote by~$v_s$.
But algebraic vector fields on $X_K$ can be identified with $K$-points of $\lie(\aut_0(X))$, see~\cite[II.2.5]{Buium}.
So $v_s\in \lie(\aut_0(X))(K)$.
Now, for any connected algebraic group~$G$ over the constants there is a {\em logarithmic-derivative on~$G$} which is a certain $\delta$-rational surjective crossed homomorphism
$\dlog:G(F)\to \lie(G)(F)$ whose fibre above~$0$ is $G(k)$.
(It is in order to have surjectivity that we required $(F,\delta)$ to be differentially closed.)
Let $\sigma\in \aut_0(X)(F)$ be such that $\dlog(\sigma^{-1})=v_s$.
This choice of $\sigma$ will make the above diagram commute.
See~\cite[II.2.10]{Buium} for a detailed argument.

In any case, we have $D$-varieties $(X_F,0)$ and $(X_F,s_F)$ over $(F,\delta)$, with~$\sigma$ an isomorphism between them.
Note that $Z_F$ is a $D$-subvariety of $(X_F,0)$ because~$Z$ is also defined over~$k$ and hence $\tau(Z_F)=(TZ)_F$.
It follows that $\sigma(Z_F)$ is a $D$-subvariety of $(X_F,s_F)$.
But $\sigma(Z_F)=Z_F$ by $\aut_0(X)$-invariance.
So $Z_F$ is a $D$-subvariety of $(X_F,s_F)$ and hence $Z_K$ is a $D$-subvariety of $(X_K,s)$.
\end{proof}

\begin{corollary} \label{cor:Rahim}
Suppose $(Y,s)$ is a projective $D$-variety over $(K,\delta)$ that has no proper $D$-subvarieties.
Then $\aut_0(Y)$ acts transitively on~$Y$.

In particular, a projective variety admitting a wild algebraic vector field is homogeneous for an algebraic group action.
\end{corollary}

\begin{proof}
Again, the ``in particular" clause is the specialisation to when $\delta=0$.

By Remark~\ref{rem:irred}, or rather its $D$-variety analogue, the absence of $D$-subvarieties does imply that~$Y$ is irreducible.

By~\cite[Theorem~II.1.1]{Buium} the only way a projective variety~$Y$ admits a $D$-variety structure over $(K,\delta)$ is if~$Y$ is defined over the constants of $(K,\delta)$, namely~$k$.
So we may assume that $Y=X_K$ for some projective variety~$X$ over~$k$, and we show that $\aut_0(X)$ acts transitively on~$X$.
Indeed, let $Z$ be the Zariski closure of the orbit of any $k$-point of~$X$.
Then $Z$ is $\aut_0(X)$-invariant, and hence $Z_K$ is a $D$-subvariety of $(X_K,s)$ by Proposition~\ref{prop:aut0-wild}.
By assumption, this forces $Z_K=X_K$, and hence $Z=X$.
So every $k$-orbit of $\aut_0(X)$ is Zariski dense in~$X$.
But this is only possible if the action is transitive.
\end{proof}

\bigskip
\section{Lifting derivations}
\label{lifting-derivations}

\noindent
Fix an algebraically closed field $k$ of characteristic zero.
In this section, we show that if $X$ is a smooth projective variety over $k$ with trivial Albanese, then every algebraic vector field on $X$ can be lifted to a $k$-linear derivation of a homogeneous coordinate ring of $X$.

Given $U\subset X$ an open subset, a vector field $v \in H^0(U,TU)$, and a line bundle~$\mathcal{L}$ on $U$, we define a \emph{lift} $\widetilde{{v}}$ of ${v}$ to $\mathcal{L}$ as a $k$-linear map
  $ \widetilde{{v}}: \mathcal{L} \to \mathcal{L}$
  satisfying
 $$\widetilde{{v}}(fs) = {v}(f)s+f\widetilde{{v}}(s),$$
for local sections $f$ and $s$ of $\O_U$ and $\mathcal{L}$ respectively.
Here we are identifying $v$ with the derivation on $\O_X$ it induces.
Notice that if $v\in H^0(X,TX)$ is a global section, then the local lifts $\widetilde{\mathcal{V}}_v$ of $v$ form a subsheaf of $\mathcal{H}om_k(\mathcal{L},\mathcal{L})$.

A more general version of this result is given in~\cite{CL77}.  We include a proof of our case of interest for completeness. 

  \begin{proposition}\label{prop:OX-torsor-lifts} Let $X$ be a smooth variety over $k$, $\mathcal{L}$ a line bundle on $X$, and $v \in H^0(X,TX)$. 
     Then 
$\widetilde{\mathcal{V}}_v$ is an $\O_X$-torsor. Hence, the obstruction to the existence of a global lift lives in $H^1(X,\O_X)$.
     \end{proposition}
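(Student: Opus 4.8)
The plan is to verify directly that $\widetilde{\mathcal{V}}_v$ satisfies the two defining properties of an $\O_X$-torsor: that $\O_X$ acts simply transitively on its local sections wherever they exist, and that local sections do in fact exist. Since we already know $\widetilde{\mathcal{V}}_v$ is a subsheaf of $\mathcal{H}om_k(\mathcal{L},\mathcal{L})$, the classification of torsors under the abelian sheaf $\O_X$ by $H^1(X,\O_X)$ will then give the final assertion.

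First I would establish the pseudo-torsor structure. Given two lifts $\widetilde{v}_1,\widetilde{v}_2$ of $v$ over an open $U$, their difference $D=\widetilde{v}_1-\widetilde{v}_2$ is $\O_U$-linear: subtracting the two Leibniz identities, the terms $v(f)s$ cancel and one is left with $D(fs)=fD(s)$. Since $\mathcal{L}$ is a line bundle, $\cEnd_{\O_U}(\mathcal{L}|_U)=\O_U$, so $D$ is multiplication by a unique regular function on $U$. Conversely, for any $a\in\O_U$ and any lift $\widetilde{v}$, the map $\widetilde{v}+a$ (where $a$ denotes multiplication by $a$) again satisfies the Leibniz rule, since the extra term $a\cdot fs=f\cdot as$ is absorbed into the $f\,\widetilde{v}(s)$ summand. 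Thus $\O_U$ acts on the set of lifts over $U$, and by the two computations above this action is simply transitive whenever that set is nonempty.

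Next I would check local nonemptiness. Cover $X$ by opens $U_i$ on which $\mathcal{L}$ is trivial, say $\mathcal{L}|_{U_i}=\O_{U_i}\cdot e_i$ for a nowhere-vanishing section $e_i$, and define $\widetilde{v}_i(fe_i):=v(f)e_i$. That this is a lift is immediate from the Leibniz rule for $v$ itself:
$$\widetilde{v}_i\bigl((gf)e_i\bigr)=v(gf)e_i=v(g)(fe_i)+g\,v(f)e_i=v(g)(fe_i)+g\,\widetilde{v}_i(fe_i).$$
Hence $\widetilde{\mathcal{V}}_v$ has a section over each $U_i$, and combined with the previous paragraph this exhibits $\widetilde{\mathcal{V}}_v$ as an $\O_X$-torsor.

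For the final sentence I would invoke the standard fact that isomorphism classes of $\O_X$-torsors are in bijection with $H^1(X,\O_X)$, the trivial class corresponding to the torsor admitting a global section. A global lift of $v$ to $\mathcal{L}$ is precisely a global section of $\widetilde{\mathcal{V}}_v$, so such a lift exists if and only if the class of $\widetilde{\mathcal{V}}_v$ vanishes in $H^1(X,\O_X)$; concretely this class is represented by the \v{C}ech $1$-cocycle $\bigl(\widetilde{v}_i-\widetilde{v}_j\bigr)\in\O_X(U_i\cap U_j)$. The argument is almost entirely formal, so I do not anticipate a genuine obstacle. The only point that uses the line-bundle hypothesis is the identification $\cEnd_{\O_X}(\mathcal{L})=\O_X$, which is what forces the difference of two lifts to be a scalar rather than a more general endomorphism, and hence what pins the structure sheaf of the torsor down to $\O_X$ exactly.
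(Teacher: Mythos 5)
Your proposal is correct and follows essentially the same argument as the paper: the difference of two lifts is $\O_X$-linear and hence lies in $\cEnd_{\O_X}(\mathcal{L})=\O_X$, adding a regular function to a lift gives another lift, local lifts exist by trivializing $\mathcal{L}$ and using $v$ itself, and the classification of $\O_X$-torsors by $H^1(X,\O_X)$ finishes the proof. Your write-up is in fact slightly more explicit than the paper's (the Leibniz verification and the \v{C}ech cocycle $(\widetilde{v}_i-\widetilde{v}_j)$ are spelled out), but there is no substantive difference in approach.
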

  \begin{proof}
First, there is a natural action of $\O_X$ on $\widetilde{\mathcal{V}}_v$ given as follows. If $f\in\O_X(U)\simeq\mathrm{Hom}(\mathcal{L}|_U,\mathcal{L}|_U)$ and $\widetilde{{v}}\colon\mathcal{L}|_U\to\mathcal{L}|_U$ is a lift of $v|_U$, then $f+\widetilde{{v}}$ is also a lift of $v|_U$.

Next, lifts of $v$ exist locally:~over any open subset $U\subset X$ where $\mathcal{L}|_U\simeq\O_U$, we may choose the lift of $v|_U$ to be $v|_U$ itself. Finally, if $V\subset X$ is any open and $\widetilde{{v}}_1$ and $\widetilde{{v}}_2$ are two lifts of $v|_V$, then $(\widetilde{{v}}_1-\widetilde{{v}}_2)(fs)=f\cdot(\widetilde{{v}}_1-\widetilde{{v}}_2)(s)$ where $f$ is any section of $\O_V$ and $s$ is any section of $\mathcal{L}|_V$. Therefore, 
$$\widetilde{{v}}_1-\widetilde{{v}}_2\in \mathcal{H}om_{\O_X}(\mathcal{L}|_V,\mathcal{L}|_V)=\O_V,$$
which proves that the lifts of $v$ form an $\O_X$-torsor.

To finish the proof, we recall that $\O_X$-torsors are classified by $H^1(X,\O_X)$, so the class $[\widetilde{\mathcal{V}}_v]\in H^1(X,\O_X)$ vanishes if and only if $\widetilde{\mathcal{V}}_v$ is the trivial torsor, which by definition, means $\widetilde{\mathcal{V}}_v$ has a global section. So $v\in H^0(X,TX)$ has a global lift if and only if $[\widetilde{\mathcal{V}}_v]\in H^1(X,\O_X)$ vanishes.
\end{proof}

The following result is also proved in~\cite[Thm.~2]{CL73} over $\mathbb{C}$ using analytic techniques.  We present a very different algebraic proof.  

\begin{corollary}
Let $X$ be a smooth projective variety over $k$ of dimension at least one, and let $v\in H^0(X,TX)$.
If $H^1(X,\mathcal{O}_X)=0$ then $v$ is not wild. \label{cor:notwild}
\end{corollary}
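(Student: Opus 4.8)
The plan is to produce a proper invariant subvariety directly, using the lift provided by Proposition~\ref{prop:OX-torsor-lifts}. Since $H^1(X,\O_X)=0$, that proposition shows the obstruction to a global lift vanishes for \emph{every} line bundle on $X$. First I would fix an ample line bundle $\mathcal{L}$, replacing it by a positive power so that $V:=H^0(X,\mathcal{L})\neq 0$, and take a global lift $\widetilde{v}\colon\mathcal{L}\to\mathcal{L}$ of $v$. Passing to global sections, the $k$-linear map $\widetilde{v}$ induces a $k$-linear endomorphism $D$ of the finite-dimensional space $V$.

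As $k$ is algebraically closed and $V\neq 0$, the operator $D$ admits an eigenvector: there are $0\neq s\in V$ and $\lambda\in k$ with $\widetilde{v}(s)=\lambda s$. I claim the zero locus $Z$ of $s$ is the sought-after proper invariant subvariety. Invariance is a local computation: over an open set $U$ on which $\mathcal{L}$ has a trivialising frame $e$, write $s=fe$ and $\widetilde{v}(e)=ge$ with $f,g\in\O_X(U)$. The Leibniz rule gives $\widetilde{v}(s)=(v(f)+fg)e$, so the eigenvalue equation reads $v(f)=(\lambda-g)f$. Hence $v(f)\in(f)$; that is, $v$ preserves the local principal ideal cutting out the divisor $Z$. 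Since $v$ is a derivation in characteristic zero it then also preserves the radical of this ideal, so $Z_{\mathrm{red}}$ is an invariant subvariety of $(X,v)$.

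It remains to check that $Z$ is proper and nonempty. It is proper since $s\neq 0$, so $Z\neq X$. For nonemptiness, the nonvanishing locus $X_s$ is affine because $\mathcal{L}$ is ample; were $Z$ empty we would have $X=X_s$ both affine and projective, forcing $\dim X=0$ and contradicting $\dim X\geq 1$. Thus $Z_{\mathrm{red}}$ is a nonempty proper invariant subvariety of $(X,v)$, witnessing that $v$ is not wild.

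The one genuinely geometric point, and the step I expect to require the most care, is the invariance claim for $Z$: establishing that the eigensection condition $\widetilde{v}(s)=\lambda s$ forces $v$ to be tangent to the divisor $Z$, and then descending from the (a priori nonreduced) divisor to its reduced structure via the fact that derivations preserve radical ideals in characteristic zero (cf.\ the use of Seidenberg's theorem in Remark~\ref{rem:irred}). Everything else is a formal consequence of the torsor description of lifts together with the vanishing of $H^1(X,\O_X)$.
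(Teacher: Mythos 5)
Your argument is correct, and its engine coincides with the paper's: the vanishing of $H^1(X,\O_X)$ gives a global lift $\widetilde{v}$ of $v$ to an ample line bundle via Proposition~\ref{prop:OX-torsor-lifts}, an eigenvector $s$ of the induced operator on the finite-dimensional space $H^0(X,\mathcal{L})$ exists because $k$ is algebraically closed, and the zero locus of $s$ (passed to its reduced structure using the characteristic-zero fact that derivations preserve radicals) is the desired nonempty proper invariant subvariety. Where you genuinely diverge is in how invariance and properness are verified. The paper takes $\mathcal{L}$ very ample and packages the lift as a degree-preserving graded derivation $\delta$ of the homogeneous coordinate ring $R=\bigoplus_{n\geq 0}H^0(X,\mathcal{L}^{\otimes n})$; the eigenvector $f$ then generates a $\delta$-invariant homogeneous ideal $fR$, and translating this back into geometry requires Krull's principal ideal theorem (to produce $g\in H^0(X,\mathcal{L})$ outside the radical of $fR$), localization at $g$, identification of the degree-zero part of $R[1/g]$ with $\O(U)$ for the chart $U=X_g$, and an algebraic computation in $R$ to see that the resulting ideal is proper. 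Your local-frame computation --- writing $s=fe$ and $\widetilde{v}(e)=ge$, so that the eigensection equation forces $v(f)=(\lambda-g)f$ and hence $v$ preserves the locally principal ideal sheaf of $V(s)$ --- obtains invariance directly at the sheaf level, with no coordinate ring, no appeal to Krull, and no localization; and your properness/nonemptiness check (a nonzero section of an ample bundle on a projective variety of dimension at least one has proper, nonempty zero locus, since $X_s$ is affine) is likewise geometric rather than algebraic. The trade-off: your route is shorter and coordinate-free, while the paper's detour through $R$ makes explicit the graded derivation of a homogeneous coordinate ring, which is the natural object from the projectively-simple-rings perspective of Reichstein--Rogalski--Zhang that motivates the whole problem.
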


\begin{proof}
Let $\mathcal{L}$ be a very ample invertible sheaf. 
By Proposition \ref{prop:OX-torsor-lifts}, we can lift ${v}$ to $\mathcal{L}$.
Such a lift then defines a degree-preserving graded derivation $\delta$ of the homogeneous coordinate ring $$R:=\bigoplus_{n\ge 0} H^0(X,\mathcal{L}^{\otimes n}).$$
Note that $X=\operatorname{Proj}(R)$.
Let $f\in H^0(X,\mathcal{L})\setminus \{0\}$ be an eigenvector for $\delta$.  Then the homogeneous ideal $fR$ defines a $\delta$-invariant ideal of $R$.
We claim that $fR$ gives rise to an invariant codimension one subvariety of $(X,v)$.
Indeed, since $\mathcal{L}$ is very ample, $R$ is generated in degree one, and, since $R$ has Krull dimension at least two, Krull's principal ideal theorem implies that there is some $g\in H^0(X,\mathcal{L})$ that is not in the radical of $fR$. 
The degree zero piece of $R[1/g]$ can be identified with $\mathcal{O}(U)$ for some dense Zariski open $U\subseteq X$, and $\delta$ induces a derivation of $R[1/g]$ that agrees with the derivation given by $v$.
That $f$ was an eigenvector for $\delta$ in $R$ ensures that $I:=fg^{-1}\mathcal{O}(U)$ is a $\delta$-invariant ideal.
As $\O(U)$ extends $\mathbb Q$, the radical of $I$ is also $\delta$-invariant (see, for example,~\cite[Lemma~1.8]{kaplansky}).
It therefore defines an invariant subvariety of $(U,v)$, whose Zariski closure in $X$ will be an invariant subvariety $Z$ of $(X,v)$.
It remains only to verify that $Z$ is nonempty, but this follows from the fact that $I$ is proper: else there is some $n\ge 1$ and some $h\in H^0(X,\mathcal{L}^{\otimes n})$ such that $(fg^{-1})(hg^{-n}) =1$, which then gives that $fh=g^{n+1}$ implying that $g$ is in the radical of $fR$, a contradiction.
\end{proof}

\bigskip
\section{Proof of Theorem \ref{thm:main}}
\label{Proof}

\noindent
We can now give a short proof of our main result.
Suppose $X$ is a projective variety over an algebraically closed field $k$ and $v$ is a wild algebraic vector field on $X$.
By Corollary \ref{cor:Rahim}, ${\rm Aut}_0(X)$ acts transitively on $X$.
Then by \cite[Theorem 5.2]{San} we can identify $X$ with $Y\times A$, where $A$ is the Albanese of $X$ and $Y=H/P$ with $H$ a connected affine algebraic group and $P$ a parabolic subgroup, and ${\rm Aut}_0(X) \cong {\rm Aut}_0(Y) \times A$.
In particular, $H^1(Y,\O_Y)=0$.
Since $H^0(X, TX)={\rm Lie}({\rm Aut}_0(X))$, it follows that $v$ can be written uniquely as $v_1+v_2$ where we may identify $v_1$ with a vector field $v_1'$ on $Y$ and $v_2$ with a vector field $v_2'$ on $A$. 
If $Y$ has dimension at least one, then, by Corollary \ref{cor:notwild}, there is an invariant proper subvariety $Z$ for $(Y,v_1')$ and by construction $Z\times A$ is invariant for both $(X,v_1)$ and $(X,v_2)$ and hence is invariant for $(X,v)$, thus contradicting wildness.
It follows that $Y$ is a point and so $X$ is abelian, as desired.  
 \qed

\bigskip
\section{Wild vector fields on abelian varieties}
\label{sec:wvfab}

\noindent
We first point out that on simple abelian varieties every nontrivial algebraic vector field is wild.
Fix an algebraically closed field $k$ of characteristic zero.

\begin{proposition}
\label{prop:saw}
Suppose $A$ is a simple abelian variety over $k$.
Every nontrivial algebraic vector field on $A$ is wild.
\end{proposition}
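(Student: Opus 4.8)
The plan is to show directly that $(A,v)$ admits no nonempty proper invariant subvariety, which is exactly wildness. Identify $H^0(A,TA)$ with $\lie(A)$ and let $\xi\in\lie(A)$ be the nonzero element corresponding to $v$, so that $v$ is the translation-invariant vector field attached to $\xi$. Since $\xi$ is translation-invariant and nonzero, it vanishes nowhere on $A$, so no point is an invariant subvariety (a point $\{p\}$ is invariant iff $\xi(p)=0$). By the Kaplansky-type result recalled in Remark~\ref{rem:irred}, the irreducible components of an invariant subvariety are invariant, so it suffices to rule out irreducible invariant subvarieties $Y$ with $1\le\dim Y<\dim A$.

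To analyse such a $Y$ I would pass to the differential-algebraic picture of Section~\ref{sec:Buium}: fix a differentially closed $(F,\delta)$ with field of constants $k$ and regard $(A_F,\xi)$ as a $D$-variety. Because $A$ is commutative, the $\delta$-points are exactly the solutions of $\dlog(a)=\xi$; since $\ker\dlog=A(k)$ and $\dlog$ is surjective (this is where differential closedness is used), the solution set is a single coset $a_0+A(k)$ for some $a_0\in A(F)$, and $\dlog(a_0)=\xi\neq0$ forces $a_0\notin A(k)$. A subvariety $Y$ over $k$ is invariant for $(A,v)$ if and only if $Y_F$ is a $D$-subvariety of $(A_F,\xi)$, and over a differentially closed field this holds precisely when the $\delta$-points lying on $Y_F$ are Zariski dense in $Y_F$; concretely, $Y_F\cap(a_0+A(k))$ is dense in $Y_F$.

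The key manoeuvre is to translate by $-a_0$. Density of $Y_F\cap(a_0+A(k))$ in $Y_F$ becomes density of $(Y_F-a_0)\cap A(k)$ in $Y_F-a_0$. But $(Y_F-a_0)\cap A(k)$ consists of $k$-rational points, and the Zariski closure in $A_F$ of any set of $k$-points is defined over $k$; hence $Y_F-a_0=Z_F$ for some subvariety $Z$ over $k$. We are thus confronted with two $k$-subvarieties related by $Y_F=Z_F+a_0$, where $a_0$ is not $k$-rational. Now $\{b:Z+b=Y\}$ is a $k$-defined coset of $\mathrm{Stab}(Z)$ that contains the $F$-point $a_0$; since $k$ is algebraically closed this coset has a $k$-point $c_0$, so $a_0-c_0\in\mathrm{Stab}(Z)(F)$. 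As $\dlog(a_0-c_0)=\xi\neq0$, the point $a_0-c_0$ is not $k$-rational, so $\mathrm{Stab}(Z)$ cannot be finite (in characteristic zero a finite $k$-group scheme is étale and acquires no new $F$-points). Therefore $\mathrm{Stab}(Z)^0$ is a positive-dimensional abelian subvariety of $A$, which by simplicity equals $A$; hence $Z=A$ and $Y_F=A_F$, contradicting $Y\subsetneq A$.

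The conceptual content here is the classical fact that for $\xi\neq0$ the one-parameter subgroup it generates is Zariski dense in a simple $A$, so the flow lines of $v$ cannot be confined to a proper subvariety; the coset $a_0+A(k)$ is exactly the algebraic avatar of such a flow line. I expect the main obstacle to be this translation step together with the stabilizer argument—in particular verifying cleanly that a set of constant points has $k$-rational Zariski closure, and that the relation $Y_F=Z_F+a_0$ with $a_0$ non-constant forces a positive-dimensional stabilizer—rather than the passage to the differential-algebraic setting, which is routine given Section~\ref{sec:Buium}.
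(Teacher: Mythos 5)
Your proposal is correct and is essentially the paper's own argument: both pass to a differentially closed $(F,\delta)$ with constants $k$, use surjectivity of $\dlog$ (which is exactly how Buium's theorem, invoked by the paper, trivializes $(A_F,v_F)$ by a translation $P=-a_0$) to move the invariant subvariety to one defined over the constants, and then finish by showing the stabilizer is positive-dimensional and invoking simplicity. The only divergence is bookkeeping in the middle step: you take a $k$-point $c_0$ of the $k$-defined coset $\{b:Z+b=Y\}$ and compute $\dlog(a_0-c_0)=\xi\neq 0$ to rule out a finite stabilizer, whereas the paper applies automorphisms $\alpha\in\aut(F/k)$ to its translation point $P$, obtaining $\alpha(P)-P\in\operatorname{Stab}(X_F)$, and observes that a finite stabilizer would force $P\in A(k)$ and hence $v=0$; the two devices are interchangeable, and your density-of-sharp-points fact is just an unwinding of the paper's assertion that $D$-subvarieties of the trivial $D$-variety are defined over the constants.
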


\begin{proof}
Let $(F,\delta)$ be a differentially closed field with field of constants $k$.
Buium's theorem~\cite[Thm~2.1]{Buium} gives us $\sigma\in\aut_0(A)(F)$ such that $\sigma:(A_F,v_F)\to(A_F,0)$ is an isomorphism of $D$-varieties.
But as $\aut_0(A)=A$, we have that $\sigma$ is just translation by some $P\in A(F)$.
Suppose $X\subseteq A$ is an invariant subvariety for $v$.
Then $\sigma(X_F)=P+X_F$ is a $D$-subvariety of the trivial $D$-variety $(A_F,0)$.
But every $D$-subvariety of $(A_F,0)$ is defined over the constant field~$k$.
Since $X_F$ is also defined over $k$, we have that for any field automorphism $\alpha\in\aut(F/k)$, $\alpha(P)-P\in\operatorname{Stab}(X_F)$.
If $\operatorname{Stab}(X_F)$ is finite then this tells us that $P$ has a finite orbit under $\aut(F/k)$, implying that $P\in A(k)$.
It would follow that translation by $P$ is an isomorphism between the vector fields $(A, v)$ and $(A,0)$ over $k$, which is only possible if $v=0$ already.
Hence, $\operatorname{Stab}(X_F)$ is a positive-dimensional algebraic subgroup of $A$.
Now, by simplicity, this would mean that $\operatorname{Stab}(X_F)=A_F$,
which in turn implies that $X=A$.
\end{proof}

Regarding general abelian varieties, we have the following classification of wild vector fields:

\begin{proposition}
\label{prop:aw}
Suppose $v$ is an algebraic vector field on an abelian variety $A$ over $k$.
Then $v$ is wild if and only if it admits no nontrivial rational first integrals.
\end{proposition}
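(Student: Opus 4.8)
The plan is to treat the two implications quite differently: one is a soft, general fact about first integrals, while the converse is the substantive direction and should be extracted from the structure of $D$-subvarieties of the trivial $D$-variety on $A$, exactly in the spirit of Proposition~\ref{prop:saw}. Recall that a nontrivial rational first integral of $(A,v)$ means a nonconstant $\phi\in k(A)$ with $v(\phi)=0$. For the implication that wildness rules out such a $\phi$, which is the observation already recorded in the introduction, I would argue directly: if $\phi$ is a nonconstant first integral then for general $c\in k$ the Zariski closure of the level set $\{\phi=c\}$ is a proper closed subvariety to which $v$ is tangent, since $v(\phi-c)=0$ makes $\phi-c$ a local first integral whose vanishing locus is $v$-invariant; this proper invariant subvariety contradicts wildness. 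This holds for any variety and needs only to be recorded briefly.

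For the converse I would argue by contrapositive: starting from a proper invariant subvariety I produce a nontrivial first integral. Fix a differentially closed $(F,\delta)$ with field of constants $k$ and, by Buium's theorem, a $\sigma\in\aut_0(A)(F)$ realizing a $D$-isomorphism $(A_F,v_F)\to(A_F,0)$; as $\aut_0(A)=A$, this $\sigma$ is translation by some $P\in A(F)$, which I will write $t_P$. Let $X\subsetneq A$ be a proper (nonempty) invariant subvariety, defined over $k$. Then $X_F$ is a $D$-subvariety of $(A_F,v_F)$, so $t_P(X_F)=P+X_F$ is a $D$-subvariety of $(A_F,0)$ and hence defined over $k$. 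Setting $S:=\operatorname{Stab}(X_F)$, a $k$-subgroup of $A$, and $B:=S^0$, properness of $X$ gives $S\neq A$ and therefore $B\subsetneq A$. Exactly as in Proposition~\ref{prop:saw}, comparing $\alpha(P+X_F)=\alpha(P)+X_F$ with $P+X_F$ for $\alpha\in\aut(F/k)$ shows $\alpha(P)-P\in S$ for every such $\alpha$.

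The key step is to pass to the quotient $\pi\colon A\to C:=A/B$, which is defined over $k$. Since $\pi(S)=S/B$ is finite, $\alpha(\pi(P))-\pi(P)$ ranges in a fixed finite set, so $\pi(P)$ has finite $\aut(F/k)$-orbit and hence, as in Proposition~\ref{prop:saw}, $\pi(P)\in C(k)$. Now observe that $t_{-\pi(P)}\circ\pi\circ t_P=\pi$ as maps $A\to C$, and each factor on the left is a $D$-morphism for the relevant structures: $t_P$ is the Buium $D$-isomorphism, $\pi\colon(A_F,0)\to(C_F,0)$ is a $D$-morphism because it is a homomorphism over the constants, and $t_{-\pi(P)}$ is a $D$-automorphism of $(C_F,0)$ precisely because $\pi(P)$ is constant. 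Hence $\pi\colon(A_F,v_F)\to(C_F,0)$ is a $D$-morphism; but $\pi$ is smooth and surjective, so the $D$-structure it induces on the target is uniquely the pushforward $d\pi(v)$, forcing $d\pi(v)=0$, i.e. $v\in\lie(B)$. Since $B\subsetneq A$, the quotient $C$ is a positive-dimensional abelian variety and every $\phi\in k(C)$ pulls back to $\pi^*\phi\in k(A)$ with $v(\pi^*\phi)=0$; these are nontrivial first integrals, which finishes the contrapositive.

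I expect the main obstacle to be exactly this last step: extracting from the single point $P$ the fact that its image on $A/B$ is a \emph{constant} point, and then transporting Buium's isomorphism along $\pi$ to force the induced vector field on the quotient to vanish. The subtlety is that $P$ itself is genuinely non-constant in general, so one cannot push the isomorphism down naively; it is the finiteness of $S/B$ that makes $\pi(P)$ constant and thereby makes $t_{-\pi(P)}$ a symmetry of the trivial $D$-variety on $C$. Everything else—the stabilizer computation and the $\aut(F/k)$-descent—is bookkeeping already carried out in Proposition~\ref{prop:saw}.
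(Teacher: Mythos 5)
Your proof is correct, and for the substantive direction it takes a genuinely different route from the paper. The easy implication (level sets of a first integral are proper invariant subvarieties) is the same in both. For the converse, the paper does not argue by contrapositive: it invokes the differential Dixmier--Moeglin machinery of~\cite{BLM}. Buium's theorem makes $(A,v)$ isotrivial, the absence of rational first integrals makes it $\delta$-rational, and then \cite[Fact~2.9]{BLM} (a model-theoretic fact about isolation of the generic type) gives $\delta$-local closedness, i.e.\ a \emph{maximum} proper $D$-subvariety $X\subset A$; since every vector field on $A$ is translation invariant, each translate of $X$ by a $k$-point is again a proper $D$-subvariety, hence lies in $X$ by maximality, forcing $X=\emptyset$. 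Your argument instead upgrades the stabilizer computation of Proposition~\ref{prop:saw}: from a proper nonempty invariant subvariety you pass to the quotient $\pi\colon A\to C=A/\operatorname{Stab}(X_F)^0$, use the $\aut(F/k)$-descent to conclude $\pi(P)\in C(k)$, and then the identity $t_{-\pi(P)}\circ\pi\circ t_P=\pi$ exhibits $\pi$ as a $D$-morphism $(A_F,v_F)\to(C_F,0)$, which forces $d\pi_x(v(x))=0$ at every point, so pullbacks of nonconstant elements of $k(C)$ are nontrivial first integrals. (Note that this last step needs only the pointwise vanishing $d\pi\circ v=0$, which is literally the $D$-morphism condition; translation invariance of $v$ enters only in the cosmetic restatement $v\in\lie(B)$, whereas the paper's maximality argument uses it essentially.) Each approach has its advantages: the paper's proof is shorter granted the cited fact from~\cite{BLM}, while yours is self-contained modulo exactly the ingredients already deployed in Proposition~\ref{prop:saw} (Buium's theorem, the fact that $D$-subvarieties of the trivial $D$-variety are defined over the constants, and the finite-orbit descent), and it extracts a sharper structural statement: $v$ fails to be wild if and only if $d\pi(v)=0$ for some positive-dimensional quotient abelian variety $\pi\colon A\to C$, equivalently $v\in\lie(B)$ for some proper abelian subvariety $B\subsetneq A$, which also recovers Proposition~\ref{prop:saw} as the special case where $A$ is simple.
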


\begin{proof}
Suppose $f$ is a nontrivial rational first integral of $(A,v)$.
Let $\delta_v$ denote the derivation on $k(A)$ induced by $v$, this means that $f\in k(A)\setminus k$ with $\delta_v(f)=0$.
In that case, any level set of $f$ over $k$ is an invariant subvariety of $(A,v)$, and by nontriviality some level set will be proper.
Hence $(A,v)$ is not wild.

For the converse we use the truth of the differential Dixmier-Moeglin equivalence for {\em isotrivial} algebraic vector fields, as studied in~\cite{BLM}.
That $(A,v)$ is isotrivial is what Buium's theorem says.
That $(A,v)$ admits no nontrivial rational first integrals means that it is {\em $\delta$-rational}, and hence by~\cite[Fact~2.9]{BLM} it is {\em $\delta$-locally closed}.\footnote{For the model theorist, this is simply the fact that as the generic type of $(A,v)$ is internal to the constants, being weakly orthogonal to the constants implies that the type is isolated.}
That is, there exists a maximum proper $D$-subvariety $X\subset A$.
But as $v$ is translation-invariant on $A$ (all algebraic vector fields on an abelian variety are translation-invariant), every translate of $X$ by a $k$-point of $A$ will also be a proper $D$-subvariety, and hence will be contained in $X$ by maximality.
The only way this is possible is if $X$ is empty -- which says precisely that $(A,v)$ is wild.
\end{proof}

The above proposition is the differential analogue of~\cite[Theorem~1.7]{bellghioca22} which treats algebraic dynamics of dynamical degree one self-maps on semiabelian varieties.

\bigskip
\section{A wild $D$-variety structure on the projective line}

\noindent
Half of our proof of Theorem~\ref{thm:main}, namely the part appearing in~$\S$\ref{sec:Buium} that allowed us to deduce homogeneity from the existence of a wild vector field, worked in the more general setting of $D$-varieties.
It is therefore natural to ask about the Theorem itself in that setting:
{\em If a projective variety~$X$ over a constant field~$k$ admits a $D$-variety structure over a differential field extension $(K,\delta)$, with no proper $D$-subvarieties, must~$X$ be an abelian variety?}
The answer is ``no" as the following example illustrates.

Let $\PP=\PP^1_\CC$ be the complex projective line.
Let $K=\CC(t)^{\alg}$ be the algebraic closure of the complex function field in one variable, equipped with the $\CC$-derivation $\delta:=\frac{d}{dt}$.
We will describe a $D$-variety structure on $\PP_K$ over $(K,\delta)$ that admits no proper $D$-subvarieties.

Since $\tau\PP_K=T\PP_K$, a $D$-variety structure is given by a vector field on $\PP_K$.
Letting $\AA=\AA^1_\CC$, the transition map $\phi:\AA\setminus\{0\}\to\AA\setminus\{0\}$ that defines $\PP$ as a glueing of two copies of the affine line is given by $\phi(x)=\frac{1}{x}$.
So $d\phi:T(\AA\setminus\{0\})\to T(\AA\setminus\{0\})$, given by
$(x,v)\mapsto (\frac{1}{x},\frac{-v}{x^2})$, gives $T\PP$ as a glueing of two copies of $T\AA$.
To define an algebraic vector field on $\PP_K$ we need two affine vector fields, $s_0,s_1:\AA_K\to T\AA_K$, such that $d\phi\circ s_0=s_1\circ\phi$ on $\AA_K\setminus\{0\}$.
To that end, let $s_0(x):=(x,x^2-t)$ and $s_1(x):=(x,tx^2-1)$.
Then
\begin{eqnarray*}
d\phi(s_0(x))
&=&
d\phi(x,x^2-t)\\
&=&
(\frac{1}{x},\frac{-x^2+t}{x^2})\\
&=&
(\frac{1}{x},t(\frac{1}{x^2})-1)\\
&=&
s_1(\frac{1}{x})\\
&=&
s_1(\phi(x)).
\end{eqnarray*}
So $s_0$ and $s_1$ patch to give a vector field $s:\PP_K\to T\PP_K$, and we have a $D$-variety $(\PP_K,s)$ over $(K,\delta)$.
Finally, we claim that $(\PP_K,s)$ has no proper $D$-subvarieties over~$K$.
As every irreducible component of a $D$-subvariety is again a $D$-subvariety, we need only show that $(\PP_K,s)$ has no $D$-points in $(K,\delta)$.
Reading through the charts, this means that, for $i=0,1$, there is no $x\in K$ such that
$s_i(x)=(x,\delta(x))$.
That is, we need to show that neither
\begin{equation}
\label{s0}
\frac{dx}{dt} =x^2-t
\end{equation}
nor
\begin{equation}
\label{s1}
\frac{dx}{dt} = tx^2-1
\end{equation}
has any solutions in $\CC(t)^{\alg}$.
Because these patch by the change of variables $u=\frac{1}{x}$, and~$0$ is not a solution to~(\ref{s1}), it suffices to show that the Ricatti equation~(\ref{s0}) has no algebraic solutions.
This is likely well known, but here is an argument using the algebraic transcendence of solutions to the Airy equation:
Suppose~$x$ is a solution to~(\ref{s0}) in some differentially closed field extending $(\CC(t)^{\alg},\frac{d}{dt})$.
Let $u$ be a solution to $u'=-xu$.
Then $u''=x^2u-x'u=x^2u-(x^2-t)u=tu$.
That is, $u$ is a solution to the Airy equation, from which it follows that $u$ and $u'$ are algebraically independent over $\CC(t)$, see~\cite[Theorem~1]{bornemann} for example.
Hence $u'=-xu$ implies that~$x\notin\CC(t)^{\alg}$, as desired.

\bigskip
\section*{Acknowledgment} The authors are grateful to BIRS, which hosted the workshop ``Noncommutative Geometry and Noncommutative Invariant Theory'' (22w5084), during which part of this project was completed.  

\bigskip

\end{document}